\newcommand{\esssup}{\operatornamewithlimits{ess\ sup}\limits}
\newcommand{\BB}{\mathbb}
\newcommand{\on}{\operatorname}
\newcommand{\mc}{\mathcal}
\newcommand{\C}{\BB C}
\newcommand{\sbq}{\subseteq}
\newcommand{\N}{\mathbb N}
\newcommand{\R}{\mathbb R}
\newcommand{\ve}{\varepsilon}
\newcommand{\pa}{||}
\newtheorem{thm}{Theorem}
\newtheorem{lem}[thm]{Lemma}
\newtheorem{cor}[thm]{Corollary}
\theoremstyle{definition}
\theoremstyle{remark}
\title{Uniform openness of multiplication in Banach spaces $L _p$}
\author{Marek Balcerzak}
\address{Institute of Mathematics,
         \L\'od\'z University of Technology, ul. W\'olcza\'nska 215,
         93-005 \L\'od\'z,
         Poland}
\email{marek.balcerzak@p.lodz.pl}
\author{Adam Majchrzycki}
\address{Institute of Mathematics,
         \L\'od\'z University of Technology, ul. W\'olcza\'nska 215,
         93-005 \L\'od\'z,
         Poland}
\email{majchrzu@wp.pl}
\author{Filip Strobin}
\address{Institute of Mathematics,
         \L\'od\'z University of Technology, ul. W\'olcza\'nska 215,
         93-005 \L\'od\'z,
         Poland}
\email{filip.strobin@p.lodz.pl}
\subjclass{46B25, 47A06, 54C10}
\keywords{multiplication, uniformly open mapping, $L_p$ spaces}
\date{}
\begin{document}
\begin{abstract}
We show that multiplication from $L_p\times L_q$ to $L_1$ (for $p,q\in [1,\infty]$, $1/p+1/q=1$)
is a uniformly open
mapping. We also prove the uniform openness of the multiplication from $\ell_1\times c_0$ to $\ell_1$.
This strengthens the former results obtained by M. Balcerzak, A.~Majchrzycki and A. Wachowicz.
\end{abstract}

\maketitle
\section{Introduction}
Let $\BB N=\{ 1,2,\dots\}$.
Assume that $X,Y$ are metric spaces. A mapping $f\colon X\to Y$ is called {\em open
at a point} $x_0\in X$ if for every $\ve >0$ there exists $\delta >0$ such that
$\on{B}(f(x_0),\delta)\sbq f[\on{B}(x_0,\ve)]$, where $\on{B}(z,r)$ stands for the open ball
with center $z$ and radius $r>0$ in a given space.
Then $f$ is {\em open} whenever it is open at every point $x\in X$.
In \cite{BMW}, $f$ is called {\em uniformly open} if for every $\ve >0$ there exists $\delta >0$
such that $\on{B}(f(x),\delta)\sbq f[\on{B}(x,\ve)]$ for all $x\in X$. Note that
an open mapping need not be uniformly open since $\arctan$ serves as a simple example (cf. \cite{BMW}).

The classical Banach open principle states that every linear continuous surjection between two Banach spaces is an open mapping.
It is known that the counterpart of this theorem for bilinear continuous surjections is false.
See \cite[Chapter 2, Exercise 11]{Rud} and \cite{H}. Note that multiplication is a natural example of a bilinear continuous
surjection for several Banach spaces. However, it need not be an open mapping, for instance it is not open in the Banach algebra
$C[0,1]$ of real-valued continuous functions on $[0,1]$ endowed with the supremum norm (see \cite{BWW}). On the other hand, if $C[0,1]$ is
replaced by $C_\C [0,1]$,
the respective Banach algebra of complex-valued continuous functions, the multiplication is open \cite{Bh}.
In \cite{BMW} one can find other examples where multiplication in function spaces is open or even uniformly open.

Let $X,Y,Z$ be Banach spaces and suppose that the multiplication $(x,y)\mapsto\Phi(x,y):=xy$ is a well defined operator from
$X\times Y$ to $Z$. Then for $A\sbq X$, $B\sbq Y$, we denote $\Phi[A\times B]$ by $A\cdot B$.
The uniform openness of $\Phi$ means that  for every $\ve >0$ there exists $\delta >0$
such that $\on{B}(xy,\delta)\sbq \on{B}(x,\ve)\cdot\on{B}(x,\ve)$ for all $(x,y)\in X\times Y$.

In \cite[Prop. 1]{BMW} it was shown that the multiplication in $\BB R$ is a uniformly open mapping. Using a similar proof, we extend it slightly as follows.

\begin{lem} \label{L1}
For any $r,R>0$ and $x,y\in\R$ we have $$\left(xy-\dfrac{rR}{4},xy+\dfrac{rR}{4}\right)\subseteq (x-r,x+r)\cdot (y-R,y+R).$$
\end{lem}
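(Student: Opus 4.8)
The plan is to first normalize the two radii, then describe the set on the right-hand side as an honest open interval and check that the target interval sits inside it.

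\emph{Step 1 (normalization).} Scaling the desired inclusion by $1/(rR)$ and writing $a=x/r$, $b=y/R$, one gets
\[
\tfrac1{rR}\bigl[(x-r,x+r)\cdot(y-R,y+R)\bigr]=(a-1,a+1)\cdot(b-1,b+1),
\qquad
\tfrac1{rR}\bigl(xy-\tfrac{rR}4,xy+\tfrac{rR}4\bigr)=\bigl(ab-\tfrac14,ab+\tfrac14\bigr),
\]
so the statement is equivalent to its case $r=R=1$. Next, replacing $(z,u)$ by $(-z,-u)$ turns that case for $(x,y)$ into the same for $(-x,y)$, and similarly for $(x,-y)$; and the assertion is symmetric in $x$ and $y$. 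Hence it suffices to prove: for all $0\le x\le y$,
\[
\Bigl(xy-\tfrac14,\ xy+\tfrac14\Bigr)\subseteq (x-1,x+1)\cdot(y-1,y+1).
\]

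\emph{Step 2 (the product set).} Put $P:=(x-1,x+1)\cdot(y-1,y+1)$. The box $(x-1,x+1)\times(y-1,y+1)$ is convex, hence connected, and $(u,v)\mapsto uv$ is continuous, so $P$ is an interval. On the \emph{closed} box the bilinear function $uv$ attains its maximum and minimum at vertices; since $x,y\ge0$ one checks the maximum is $M:=(x+1)(y+1)$ and the minimum is $m:=\min\{(x-1)(y-1),(x-1)(y+1),(x+1)(y-1)\}$. On the \emph{open} box neither is attained: if $uv$ had, say, a minimum at an interior point $(u_0,v_0)$, then $u\mapsto uv_0$ would attain a minimum on the open interval $(x-1,x+1)$, forcing $v_0=0$ and hence the extremal value $u_0v_0=0$; but arbitrarily close to $(u_0,0)$ the product $uv$ takes both signs, a contradiction (and likewise for a maximum). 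Therefore $P=(m,M)$, the endpoints excluded (and $m<M$ since the box is nondegenerate).

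\emph{Step 3 (conclusion).} It remains to check $M-xy\ge\tfrac14$ and $xy-m\ge\tfrac14$. The first is immediate: $M-xy=x+y+1\ge1$. For the second, a short case analysis over which vertex realizes $m$: if $m=(x-1)(y+1)$ then $xy-m=1+(y-x)\ge1$; if $m=(x+1)(y-1)$ then in particular $(x+1)(y-1)\le(x-1)(y+1)$, i.e.\ $y\le x$, so $x=y$ and $xy-m=1$; and if $m=(x-1)(y-1)$ then necessarily $x,y\ge1$ and $xy-m=x+y-1\ge1$. In every case $xy-m\ge1\ge\tfrac14$, so $\bigl(xy-\tfrac14,\,xy+\tfrac14\bigr)\subseteq(m,M)=P$, as required.

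\emph{Main obstacle.} The only delicate point is Step 2: that the product of two open intervals is exactly the open interval $(m,M)$ — i.e.\ that it has no gaps (connectedness) and does not contain its extreme corner products (no interior extrema of $uv$). The remainder is bookkeeping; in fact Step 3 shows the constant $rR$ would already do, but we keep $rR/4$ to stay parallel with the proof of \cite[Prop.\ 1]{BMW}. One may instead bypass Step 2 by exhibiting $u,v$ with $uv=z$ directly — fixing a suitable $v$ near $y$ and letting $u$ sweep across $(x-1,x+1)$ via the intermediate value theorem — but the interval description above is shorter.
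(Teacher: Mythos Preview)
Your argument is correct and takes a genuinely different route from the paper's. The paper is constructive: given $z$ with $|z-xy|<rR/4$, it exhibits $u,v$ with $uv=z$ via a three-case split ($|x|>r/4$; $|y|>R/4$; both small), setting $(u,v)=(x,z/x)$ or its symmetric counterpart in the first two cases and $(u,v)=\bigl(\sqrt{|z|r/R},\,\sqrt{|z|R/r}\,\on{sgn} z\bigr)$ in the third. You instead describe the product set globally: after normalizing, you show that $(x-1,x+1)\cdot(y-1,y+1)$ is exactly the open interval $(m,M)$ (connectedness for ``no gaps'', a short bilinear-extremum argument for ``endpoints excluded''), and then verify $xy\pm\tfrac14\in[m,M]$ by comparing vertex values. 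Your route is more conceptual and, as you observe, actually delivers the sharper radius $rR$ in place of $rR/4$; this improvement propagates straightforwardly through Lemma~\ref{L2} and the proof of Theorem~\ref{TT} to yield $\on{B}_1(fg,\ve^2)$ in place of $\on{B}_1(fg,\ve^2/4)$. The paper's approach, by contrast, is entirely elementary---no appeal to connectedness or interior-extremum reasoning---and hands over the factorization explicitly, though the downstream lemmas use only the existence of $u,v$ and would work equally well with your version.
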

\begin{proof}
Fix $r,R>0$ and $x,y\in\R$. Let $|z-xy|<\dfrac{rR}{4}$. Consider three cases:
\begin{itemize}
\item[$1^0$] $|x|>r/4$. Put $u=x$ and $v=z/x$. Then $z=uv$ and $|u-x|<r$. Also
$$|v-y|=\frac{|z-xy|}{|x|}<\frac{rR}{4}\cdot\dfrac{4}{r}=R$$
\item[$2^0$] $|y|>R/4$. Put $v=y$ and $u=z/y$. The rest is analogous to $1^0$.
\item[$3^0$] $|x|\leq{r/4}$ and $|y|\leq{R/4}$. Put $u=\sqrt{\dfrac{|z|r}{R}}$, $v=\sqrt{\dfrac{|z|R}{r}}\on{sgn} z$.
Then $z=uv$ and
\begin{equation*}
\begin{array}{rcl}
|u-x|&\le& |u|+|x|\le\sqrt{\dfrac{|z|r}{R}}+\dfrac{r}{4}\le\sqrt{\dfrac{r}{R}}\sqrt{|z-xy|}+\sqrt{\dfrac{r}{R}}\sqrt{|xy|}+
\dfrac{r}{4}\\&<&\sqrt{\dfrac{r}{R}}\dfrac{\sqrt{rR}}{2}+\sqrt{\dfrac{r}{R}}\dfrac{\sqrt{rR}}{4}+\dfrac{r}{4}=r.
\end{array}
\end{equation*}
Similarly, $|v-y|<R$.
\end{itemize}
\end{proof}

\section{The results}
Fix a measure space $(X,\mc S,\mu)$, where $\mu$ is a measure on the $\sigma$-algebra $\mc S$ of subsets of $X$.
We will consider the respective Banach spaces $L_p=L_p(X)$ with $p\in [1,\infty]$.
For $p,q\in [1,\infty]$, $1/p+1/q=1$, consider the multiplication $\Phi\colon L_p\times L_q\to L_1$
(given by $\Phi(f,g)=fg$). Note that this is a bilinear continuous surjection (the fact that it is well defined follows from the H\"{o}lder inequality).
One of the main results of \cite{BMW} states
that this mapping is open. Our main theorem improves it by showing that the mapping is uniformly open.

For $p\in[1,\infty]$, the norm in $L_p$ will be written as $\pa\cdot\pa_p$. By $\on{B}_p(f,r)$ and $\overline{\on{B}}_p(f,r)$
we denote the respective open and closed balls in this space.

\begin{thm} \label{TT}
Let $p\in[1,\infty)$ and let $q\in (1,\infty]$ be such that $1/p+1/q=1$. Then multiplication $L_p\times L_q\ni (f,g)\mapsto fg\in L_1$ is a uniformly open
mapping.
More exactly, for any $\varepsilon>0$, $(f,g)\in L_p\times L_q$, we have
$$\on{B}_1\left(fg,\frac{\ve^2}{4}\right)\subseteq \on{B}_{p}({f},{\ve})\cdot\on{B}_{q}({g},{\ve}).$$
\end{thm}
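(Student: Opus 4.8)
The plan is to mimic the one-dimensional argument of Lemma~\ref{L1}, carried out pointwise, and then to control the $L_p$ and $L_q$ norms of the resulting factorization by splitting $X$ according to the three cases appearing there. Fix $\varepsilon>0$, a pair $(f,g)\in L_p\times L_q$, and a function $h\in L_1$ with $\pa h-fg\pa_1<\varepsilon^2/4$. I must produce $\ti f\in\on{B}_p(f,\varepsilon)$ and $\ti g\in\on{B}_q(g,\varepsilon)$ with $\ti f\,\ti g=h$ a.e. The natural guess, suggested by the scalar lemma, is to use the local radii $r(x)$ and $R(x)$ that reflect how the mass $\varepsilon^2/4$ of $h-fg$ is distributed: one wants $r$ with $\pa r\pa_p\le\varepsilon$ and $R$ with $\pa R\pa_q\le\varepsilon$ and $r(x)R(x)$ comparable to $4|h(x)-f(x)g(x)|$ pointwise, so that Lemma~\ref{L1} applies at (almost) every $x$.

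Concretely, I would set (on the set where $h\ne fg$, and trivially $\ti f=f$, $\ti g=g$ elsewhere)
$$
r(x)=\varepsilon\left(\frac{|h(x)-f(x)g(x)|}{\pa h-fg\pa_1}\right)^{1/p},\qquad
R(x)=\frac{4|h(x)-f(x)g(x)|}{r(x)},
$$
so that $r(x)R(x)=4|h(x)-f(x)g(x)|$ and, after checking the arithmetic, $\pa r\pa_p=\varepsilon\,\pa h-fg\pa_1^{-1/p}\pa h-fg\pa_1^{1/p}=\varepsilon$ while $\pa R\pa_q=4\pa h-fg\pa_1/\varepsilon\cdot\pa h-fg\pa_1^{-1/q}\cdot(\text{normalization})$, which reduces to $4\pa h-fg\pa_1^{1-1/p}/\varepsilon = 4\pa h-fg\pa_1^{1/q}/\varepsilon<\varepsilon$ since $\pa h-fg\pa_1^{1/q}<(\varepsilon^2/4)^{1/q}$; the exponent bookkeeping here, using $1/p+1/q=1$, is exactly what makes both norms land below $\varepsilon$. (The case $p=1$, $q=\infty$ needs $R\in L_\infty$, i.e. an essential-sup bound, and should be handled by noting $R(x)=4|h-fg|/r(x)$ with $r\equiv\varepsilon/\pa h-fg\pa_1\cdot|h-fg|$ is bounded a.e.\ on the relevant set; this boundary case is where I expect to spend the most care.)

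With $r,R$ in hand, Lemma~\ref{L1} gives, for a.e.\ $x$ with $|h(x)-f(x)g(x)|<r(x)R(x)/4$ — which holds by the very choice $r(x)R(x)=4|h(x)-f(x)g(x)|$, so I actually want strict inequality and must instead arrange $r(x)R(x)=4|h(x)-f(x)g(x)|+\eta(x)$ for a tiny positive $\eta$, or simply apply the explicit formulas from the proof of Lemma~\ref{L1} directly — values $\ti f(x)\in(f(x)-r(x),f(x)+r(x))$ and $\ti g(x)\in(g(x)-R(x),g(x)+R(x))$ with $\ti f(x)\ti g(x)=h(x)$. The explicit case formulas ($u=x,v=z/x$ when $|x|>r/4$, etc.) show $\ti f,\ti g$ are measurable, being built by measurable case-selection from measurable data. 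Then $\pa\ti f-f\pa_p\le\pa r\pa_p\le\varepsilon$ and $\pa\ti g-g\pa_q\le\pa R\pa_q<\varepsilon$, and $\ti f\,\ti g=h$, completing the proof. The slick way to avoid the strict-inequality nuisance is to first replace $\varepsilon^2/4$ by a slightly larger radius than $\pa h-fg\pa_1$ in the definition of $r$ (there is room, since the hypothesis is a strict inequality), so that $r(x)R(x)>4|h(x)-f(x)g(x)|$ pointwise and Lemma~\ref{L1} applies cleanly.

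The main obstacle, as indicated, is the endpoint $q=\infty$: there the $L_q$-control is an essential supremum rather than an integral, so the "distribute the mass proportionally" trick for $R$ must be re-examined — one wants $r$ to carry \emph{all} the integrability (via its $L_1$ norm, since $p=1$) and $R$ to be merely essentially bounded, which forces a different, simpler choice such as $r(x)=\varepsilon\,|h(x)-f(x)g(x)|/\pa h-fg\pa_1$ and $R(x)=4\pa h-fg\pa_1/\varepsilon$ (constant!), giving $\pa R\pa_\infty=4\pa h-fg\pa_1/\varepsilon<\varepsilon$ and $\pa r\pa_1=\varepsilon$, with $r(x)R(x)=4|h(x)-f(x)g(x)|$ as required. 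So the endpoint case is actually easier once the right ansatz is chosen; the genuine content is verifying, in the range $p\in(1,\infty)$, that the Hölder-type exponent computation for $\pa r\pa_p$ and $\pa R\pa_q$ closes, and that the pointwise factorization from Lemma~\ref{L1} can be performed measurably and with the strict inequality satisfied.
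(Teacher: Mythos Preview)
Your approach is correct and is in fact considerably more direct than the paper's. The core idea---apply Lemma~\ref{L1} pointwise with measurable local radii $r(x)\sim|h(x)-f(x)g(x)|^{1/p}$ and $R(x)\sim|h(x)-f(x)g(x)|^{1/q}$, normalized so that $\pa r\pa_p,\pa R\pa_q\le\varepsilon$---is exactly what the paper does in its Lemma~\ref{L2}, but there only for \emph{countably valued} $f,g,h$. The paper then spends the bulk of the proof on a three-stage reduction: first it approximates bounded $f,g,h$ on a finite-measure space by countably valued step functions (with a delicate multiplicative correction via an auxiliary function $\alpha$ to recover $h$ exactly from its step approximation $h'$), and then it invokes Lemma~\ref{L3} to reduce the general case to the bounded, finite-measure case by restricting to a large set $A$ and handling $X\setminus A$ separately. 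Your observation that the explicit case-formulas in the proof of Lemma~\ref{L1} are measurable selections (the sets $\{|f|>r/4\}$, $\{|g|>R/4\}$ being measurable) dissolves the need for any of this approximation machinery: one defines $\ti f,\ti g$ directly by those formulas and is done. The paper's longer route yields the same constant $\varepsilon^2/4$ and no additional information, so your argument is a genuine simplification. The one point requiring care, as you flag, is that the naive normalization gives $r(x)R(x)=4|h(x)-f(x)g(x)|$ with equality and $\pa r\pa_p=\varepsilon$ exactly; replacing $\varepsilon$ by any $\varepsilon'\in\bigl(2\pa h-fg\pa_1^{1/2},\varepsilon\bigr)$ in the definitions of $r,R$ yields $r(x)R(x)>4|h(x)-f(x)g(x)|$ pointwise and $\pa r\pa_p=\pa R\pa_q=\varepsilon'<\varepsilon$, so both strict inequalities close cleanly in the $p>1$ case and in the endpoint $p=1$, $q=\infty$ case (where, as you say, one simply takes $R$ constant).
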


Except for Lemma \ref{L1}, two next lemmas will be needed.

\begin{lem}\label{L3}
For every $f\in L_1$ and every $\ve>0$, there exists $A\in\mc S$ such that
$\mu(A)<\infty$, $\sup\{|f(x)|\colon x\in A\}<\infty$ and $\int_{X\setminus A}|f|\;d\mu<\ve$.
\end{lem}
\begin{proof}
For every $n\in\N$, let $A_n=\{x\in X\colon {1}/{n}\leq|f(x)|\leq n\}$.
Then $(A_n)$ is an increasing sequence of sets in $\mc S$ such that $\bigcup_{n\in\N}A_n=\{x\in X\colon 0<|f(x)|<\infty\}$.
Since $f\in L_1$, we have $\mu(\{x\in X\colon |f(x)|=\infty\})=0$ and hence
$$
\pa f\pa_1=
\int_{\bigcup_{n\in\N}A_n}|f|\;d\mu=\lim_{n\rightarrow\infty}\int_{A_n}|f|\;d\mu.
$$
In particular, there exists $k\in\N$ such that
$$
\int_{X\setminus A_k}|f|\;d\mu=\int_{(\bigcup_{n\in\N}A_n)\setminus A_k}|f|\;d\mu<\ve.
$$
Clearly, for every $x\in A_k$, $|f(x)|\leq k$, and since also $|f(x)|\geq {1}/{k}$, then $\mu(A_k)<\infty$.
\end{proof}

We say that an $\mc S$-measurable function $f\colon X\rightarrow \R$ is \emph{countably valued}, if $f$ is of the form $\sum_{n\in\N}a_n\chi_{A_n}$,
for a sequence $(a_n)$ of reals and a sequence $(A_n)$ of sets in $\mc S$ which constitute a partition of $X$. Clearly, if $p\in[1,\infty)$ and
$f=\sum_{n\in\N}a_n\chi_{A_n}\in L_p$ is countably valued, then $a_k\neq 0$ implies $\mu(A_k)<\infty$.

\begin{lem}\label{L2}
Let $p\in[1,\infty)$ and let $q\in (1,\infty]$ be such that $1/p+1/q=1$. Then for every $\ve >0$ and any countably valued functions
$f\in L_p$, $g\in L_q$ and $h\in \on{B}_1(fg,{\varepsilon^2}/{4})$, we have  $h\in \on{B}_p(f,\varepsilon)\on{B}_q(g,\varepsilon)$ if $p>1$,
and $h\in \on{B}_p(f,\varepsilon)\overline{\on{B}_q}(g,\varepsilon)$ if $p=1$.
\end{lem}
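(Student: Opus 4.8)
The plan is to split into the cases $p>1$ (so $q<\infty$) and $p=1$ (so $q=\infty$) and, in each, to build $u$ and $v$ directly by an explicit formula (piecewise on measurable sets) so that $uv=h$ holds exactly on $X$, estimating $\|u-f\|_p$ and $\|v-g\|_q$ by hand. I write $\psi:=|h-fg|$, so that $\int_X\psi\,d\mu=\|h-fg\|_1=:\eta<\varepsilon^2/4$. For $p>1$ this needs no countability hypothesis; for $p=1$ countability is only a convenience.

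For $p>1$ I would apply Lemma \ref{L1} at each point, but with radii depending on the point: fix a small $\delta>0$ and set, for $\omega\in X$,
\[
  r(\omega):=\bigl(4(1+\delta)\bigr)^{1/p}\varepsilon^{\,1-2/p}\psi(\omega)^{1/p},\qquad
  R(\omega):=\bigl(4(1+\delta)\bigr)^{1/q}\varepsilon^{\,1-2/q}\psi(\omega)^{1/q}.
\]
Since $1/p+1/q=1$ these satisfy $r(\omega)R(\omega)/4=(1+\delta)\psi(\omega)$, so on $\{\psi>0\}$ we have $|h(\omega)-f(\omega)g(\omega)|<r(\omega)R(\omega)/4$, and Lemma \ref{L1} (applied with $x=f(\omega)$, $y=g(\omega)$ and the point $h(\omega)$) gives $u(\omega),v(\omega)$ with $u(\omega)v(\omega)=h(\omega)$, $|u(\omega)-f(\omega)|<r(\omega)$, $|v(\omega)-g(\omega)|<R(\omega)$; on $\{\psi=0\}$ I set $u=f$, $v=g$. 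To get measurable $u,v$ I would use the three explicit formulas from the proof of Lemma \ref{L1} on the measurable sets $\{|f|>r/4\}$, $\{|f|\le r/4\}\cap\{|g|>R/4\}$, $\{|f|\le r/4\}\cap\{|g|\le R/4\}$ (namely $u=f,v=h/f$; $v=g,u=h/g$; $u=\sqrt{|h|r/R},v=\operatorname{sgn}(h)\sqrt{|h|R/r}$). Then $|u-f|^p\le r^p$ and $|v-g|^q\le R^q$ pointwise, so
\[
  \|u-f\|_p^p\le\int_X r^p\,d\mu=4(1+\delta)\varepsilon^{\,p-2}\eta,\qquad
  \|v-g\|_q^q\le\int_X R^q\,d\mu=4(1+\delta)\varepsilon^{\,q-2}\eta,
\]
and since $\eta<\varepsilon^2/4$ I can choose $\delta>0$ with $4(1+\delta)\eta<\varepsilon^2$, making the right-hand sides $<\varepsilon^p$ and $<\varepsilon^q$; then $u\in L_p$, $v\in L_q$, and $h=uv\in\on{B}_p(f,\varepsilon)\on{B}_q(g,\varepsilon)$.

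For $p=1$, $q=\infty$ the $L_\infty$-error cannot be spread out, so $v$ must stay in the closed tube $\{|v-g|\le\varepsilon\}$ and I would argue region by region. On $\{|g|\ge\varepsilon\}$: $v=g$, $u=h/g$, so $|v-g|=0$ and $\int|u-f|=\int\psi/|g|\le\eta/\varepsilon$. On $\{|g|<\varepsilon\}\cap\{f\ne0\}\cap\{\psi\le|f|\varepsilon\}$: $u=f$, $v=h/f$, so $u-f=0$ and $|v-g|=\psi/|f|\le\varepsilon$. On the remaining set $B:=\{|g|<\varepsilon\}\cap\bigl(\{f=0\}\cup\{\psi>|f|\varepsilon\}\bigr)$: nudge $g$ to $c:=g+\operatorname{sgn}(g)\varepsilon$ (put $c=\varepsilon$ where $g=0$), so $|c|\ge\varepsilon$ and $|c-g|=\varepsilon$, and set $v=c$, $u=h/c$; then $|v-g|=\varepsilon$ and $|u-f|=|h-cf|/|c|\le(\psi+|f|\varepsilon)/\varepsilon\le2\psi/\varepsilon$ on $B$ (since $|f|\varepsilon\le\psi$ there), giving $\int_B|u-f|\le2\eta/\varepsilon$. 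Altogether $uv=h$, $v\in L_\infty$ with $\|v-g\|_\infty\le\varepsilon$, and $\|u-f\|_1\le\eta/\varepsilon+2\eta/\varepsilon=3\eta/\varepsilon<\varepsilon$, so $u\in L_1$ and $h=uv\in\on{B}_1(f,\varepsilon)\overline{\on{B}_q}(g,\varepsilon)$. If one prefers to use that $g$ is countably valued, write $g=\sum_k b_k\chi_{F_k}$ and, on those $F_k$ with $|b_k|<\varepsilon$, use the single constant nudge $c_k=b_k+\operatorname{sgn}(b_k)\varepsilon$ instead of the pointwise one.

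I expect the main obstacle to be the estimate of $\|u-f\|_1$ on the set $B$ in the case $p=1$: there $u=h/c$ with $|c|$ possibly as small as $\varepsilon$, so $u$ can be large, but $B$ has small $\mu$-measure exactly because $\psi$ is large on it, and the inequality $|f|\varepsilon\le\psi$ on $B$ is what converts ``$u$ large on a small set'' into the bound $2\eta/\varepsilon$. A secondary point is that getting the sharp constant $\varepsilon^2/4$ (and not something larger) forces, in the case $p>1$, taking $r(\omega)R(\omega)$ essentially equal to the local error $4\psi(\omega)$, i.e.\ saturating Lemma \ref{L1}, and using that $\eta$ is \emph{strictly} less than $\varepsilon^2/4$ in order to afford the factor $1+\delta$.
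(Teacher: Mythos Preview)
Your argument is correct. For $p>1$ it is a pointwise (and in fact more general) version of the paper's proof: the paper writes $f,g,h$ on a common partition $(A_n)$, sets $\lambda_n=|z_n-x_ny_n|\mu(A_n)/\eta$ and applies Lemma~\ref{L1} on each piece with $r_k=\varepsilon(\lambda_k/\mu(A_k))^{1/p}$, $R_k=\varepsilon(\lambda_k/\mu(A_k))^{1/q}$; your choice $r(\omega)R(\omega)=4(1+\delta)\psi(\omega)$ is the continuous analogue of the paper's $r_kR_k=\varepsilon^2|z_k-x_ky_k|/\eta$, with the slack carried by $1+\delta$ instead of $\varepsilon^2/\eta>4$. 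Your version dispenses with the countably-valued hypothesis entirely, which is a genuine gain (the paper uses the hypothesis only to reduce to a sum). For $p=1$ the two proofs diverge: the paper again applies Lemma~\ref{L1} on each piece, now with the constant radius $R_k=\varepsilon$ (this is exactly why only the \emph{closed} ball $\overline{\on{B}}_\infty(g,\varepsilon)$ is obtained), whereas you bypass Lemma~\ref{L1} and build $u,v$ directly by the three-region construction. Your route gives the slightly better bound $\|u-f\|_1\le 3\eta/\varepsilon<3\varepsilon/4$ and again does not need $f$ or $h$ to be countably valued; the paper's route is shorter to state once Lemma~\ref{L1} is in hand.
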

\begin{proof}
Let $\ve >0$ and let $f\in L_p$, $g\in L_q$, $h\in L_1$ be countably valued functions such that $\pa fg-h\pa_1<{\ve^2}/{4}$.
Then there exist a sequence $(A_n)$ of sets in $\mc S$ that form a partition of $X$, and sequences of reals $(x_n)$, $(y_n)$, $(z_n)$ such that
$$
f=\sum_{n\in\N}x_n\chi_{A_n},\;\;\;g=\sum_{n\in\N}y_n\chi_{A_n}\;\;\mbox{ and }\;\;h=\sum_{n\in\N}z_n\chi_{A_n}.
$$
(Clearly, we can fix a common partition of $X$ which determines the expressions of $f$, $g$ and $h$; then $\mu(A_n)<\infty$ for all $n\in\BB N$.)

Let $E=\{ n\in\BB N\colon z _n=x_ny_n\mbox{ or }\mu(A_n)=0\}$. For all $n\in E$, put
\begin{itemize}
\item $u_n=x_n$ and $v_n=y_n$ if $z_n=x_ny_n$;
\item $u_n=\sqrt{|z_n|}$ and $v_n=\sqrt{|z_n|}\on{sgn}(z_n)$ if $\mu(A_n)=0$ and $z_n\neq x_ny_n$.
\end{itemize}
If $E=\N$, the assertion is trivial. So, assume that $E\neq\BB N$.
Put $$\eta=\pa fg-h\pa_1=\sum_{n\notin E}|z_n -x_n y_n|\mu(A_n)$$ and for every $n\notin E$, $$\lambda_n ={|z_n -x_n y_n|}\mu(A_n)/{\eta}.$$
Then $$\eta<\ve^2 /4,\;\;\lambda_n\in(0,1]\mbox{ for all }n\notin E,\;\;\mbox{ and }\sum_{n\notin E}\lambda_n =1.$$
The next part of the proof is divided into two cases.

{\bf Case 1: $p>1$.} For every $k\notin E$, we have
$$|z_k -x_k y_k|=\frac{\lambda_k\eta}{\mu(A_k)}<\frac{\lambda_k}{\mu(A_k)}\dfrac{\ve^2}{4}=
\dfrac{1}{4}\left(\ve\left(\frac{\lambda_k}{\mu\left(A_k\right)}\right)^{1/p}\right)
\left(\ve\left(\frac{\lambda_k}{\mu\left(A_k\right)}\right)^{1/q}\right).$$
Applying Lemma \ref{L1}
to $r_k=\ve\left(\frac{\lambda_k}{\mu\left(A_k\right)}\right)^{1/p}$ and $R_k=\ve\left(\frac{\lambda_k}{\mu\left(A_k\right)}\right)^{1/q}$,
we find $u_k, v_k \in\R$ such that $z_k =u_k v_k$ and $|u_k -x_k|<r_k$, $|v_k -y_k|<R_k$.
Set $u=\sum_{n\in\N}u_n\chi_{A_n}$ and $v=\sum_{n\in\N}v_n\chi_{A_n}$. Then $h=uv$ and
$$\pa u-f\pa_p=\left(\sum_{n\in\BB N}|u_n-x_n|^p\mu(A_n)\right)^{1/p}=\left(\sum_{n\notin E}|u_n -x_n|^{p}\mu(A_n)\right)^{1/p}$$
$$<\left(\sum_{n\notin E}\frac{\lambda_n}{\mu(A_n)}\ve^p\mu(A_n)\right)^{1/p}=\ve\left(\sum_{n\notin E} \lambda_n\right)^{1/p}
=\ve ,$$
and we prove $\pa v-g\pa_q<\ve$ in an analogous way.

{\bf Case 2:} $p=1$ (then $q=\infty$). For every $k\notin E$, we have
$$|z_k -x_k y_k|=\frac{\lambda_k\eta}{\mu(A_k)}<\frac{\lambda_k}{\mu(A_k)}\dfrac{\ve^2}{4}=\dfrac{1}{4}\left(\frac{\lambda_k \ve}{\mu\left(A_k\right)}\right)\ve.$$
Applying Lemma \ref{L1}
to $r_k=\frac{\lambda_k \ve}{\mu\left(A_k\right)}$ and $R_k=\ve$, we find $u_k, v_k \in\R$ such that $z_k =u_k v_k$ and $|u_k -x_k|<r_k$, $|v_k -y_k|<R_k$.
As previously, set $u=\sum_{n\in\N}u_n\chi_{A_n}$ and $v=\sum_{n\in\N}v_n\chi_{A_n}$. Then $h=uv$ and
$$\pa u-f\pa_1=\sum_{n\in\BB N}|u_n-x_n|\mu(A_n)=\sum_{n\notin E}|u_n -x_n|\mu(A_n)$$
$$<\sum_{n\notin E}\frac{\lambda_n \ve}{\mu(A_n)}\mu(A_n)=\ve\sum_{n\notin E} \lambda_n=\ve ,$$
and $$\pa v-g\pa_\infty=\sup_{n\notin E}|v_n-y_n|\le\ve . $$
\end{proof}

{\bf Proof of Theorem \ref{TT}.}
Let $\ve >0$, $(f,g)\in L_p\times L_q$ and consider $h \in \on{B}_1(fg,{\varepsilon^2}/{4})$.
We first deal with the case when $\mu(X)<\infty$ and $f,g,h$ are bounded.
Let $M>0$ be such that $\mu(X)<M$ and $\sup_{x\in X}\{|f(x)|,|g(x)|,|h(x)|\}\le M$. Let
 $\ve_1>0$ be such that
\begin{equation}
\ve_1+\sqrt{4\pa h-fg\pa_1+8\ve_1}<\ve.\label{b}
\end{equation}
Since $\sqrt{4\pa h-fg\pa_1}<\ve$, we can choose such an $\ve_1$. Now let $\delta>0$ be such that
\begin{equation}
\delta<\ve_1\min\left\{\frac{1}{M^{1/p}},\frac{1}{M^{1/q}},\frac{1}{2M^2}\right\}\label{111}
\end{equation}
(if $q=\infty$ then $M^{1/q}:=1$).
Let $f'$ be a countably valued function defined by
\begin{itemize}
\item $f'(x)=k\delta$, if $f(x)\in [k\delta,(k+1)\delta)$ and $k\in\N\cup\{0\}$;
\item $f'(x)=-k\delta$, if $f(x)\in [(-k-1)\delta,-k\delta)$ and $k\in\N\cup\{0\}$.
\end{itemize}
Let $g'$ be countably valued function associated with $g$ in an analogous way. Then for all $x\in X$ we have
$$
|f(x)-f'(x)|\leq \delta,\;\;\;|g(x)-g'(x)|\leq \delta,\;\;\;|f'(x)|\leq M,\;\;\;|g'(x)|\leq M
$$
and
$$
|f(x)g(x)-f'(x)g'(x)|\leq |f(x)g(x)-f'(x)g(x)|+|f'(x)g(x)-f'(x)g'(x)|\leq M\delta + M\delta=2M\delta.
$$
Hence, by (\ref{111}), we obtain (recall that $M^{1/\infty}=1$):
\begin{equation}
\pa f-f'\pa_p\leq \delta M^{\frac{1}{p}}<\ve_1\;\mbox{, }\;\pa g-g'\pa_q\leq\delta M^\frac{1}{q}<\ve_1\;\mbox{ and }\pa fg-f'g'\pa_1\leq 2M^2\delta<\ve_1.\label{d}
\end{equation}
Now, let $d>0$ be such that
\begin{equation}
d<1,\label{e}
\end{equation}
\begin{equation}
(1-d)M^2<\ve_1,\label{e5}
\end{equation}
\begin{equation}
\ve_1+\frac{1-d}{d}M^{1+\frac{1}{p}}+\frac{1}{d}\sqrt{4\pa h-fg\pa_1+8\ve_1}<\ve.\label{e2}
\end{equation}
By (\ref{b}), such a choice of $d$ is possible. Let $a_n=d^nM$ for $n\in\BB N\cup\{ 0\}$. Since $d\in(0,1)$, we have $a_n\searrow 0$.

Define $h'$ in the following way:
\begin{itemize}
\item $h'(x)=0$ if $h(x)=0$;
\item $h'(x)=a_{n+1}$ if $h(x)\in[a_{n+1},a_{n})$ and $n\in\BB N\cup\{ 0\}$;
\item $h'(x)=-a_{n+1}$ if $h(x)\in(-a_n,-a_{n+1}]$ and $n\in\BB N\cup\{ 0\}$.
\end{itemize}
Clearly, $h'$ is countably valued, bounded by $M$, and for every $x\in X$ with $h(x)\neq 0$,
\begin{equation}1\leq\frac{h(x)}{h'(x)}\leq \frac{a_n}{a_{n+1}}=\frac{1}{d}.\label{bb}\end{equation}
Moreover by (\ref{e}), $|h(x)-h'(x)|\leq M-dM=(1-d)M$ for every $x\in X$, hence by (\ref{d}) and (\ref{e5}) we have
\begin{equation}
\pa h'-f'g'\pa_1\leq\pa h'-h\pa_1+\pa h-fg\pa_1+\pa fg-f'g'\pa_1< M^2(1-d)+\pa h-fg\pa_1+\ve_1\leq \pa h-fg\pa_1+2\ve_1.\label{e1}
\end{equation}
Let
\begin{equation}
\overline{\ve}=\sqrt{4\pa h-fg\pa_1+8\ve_1}.\label{f}
\end{equation}
Then by (\ref{e1}), $\pa h'-f'g'\pa_1<{\overline{\ve}^2}/{4}$ and
by Lemma \ref{L2}, there are functions $u\in L_p$ and $v\in L_q$ such that $h'=uv$, $u\in B_p(f',\overline{\varepsilon})$
and $v\in B_q(g',\overline{\varepsilon})$ if $p>1$, and  $v\in \overline{B_q}(g',\overline{\varepsilon})$ if $p=1$.

Now, define a function $\alpha\colon X\to \R$ in the following way:
\begin{itemize}
\item $\alpha(x)=1$ if $h'(x)=0$;
\item $\alpha(x)=\frac{h(x)}{h'(x)}$  if $h'(x)\neq 0$;
\end{itemize}
By (\ref{bb}), $1\leq\alpha(x)\leq {1}/{d}$ for every $x\in X$ and $h=\alpha h'=(\alpha u)v$. Finally, by (\ref{b}), (\ref{d}), (\ref{e2}), (\ref{f}) we have
$$
\pa g-v\pa_q\leq \pa g-g'\pa_q+\pa g'-v\pa_q<\ve_1+\overline{\ve}<\ve
$$
and
$$
\pa f-\alpha u\pa_p\leq \pa f-f'\pa_p+\pa f'-\alpha u\pa_p\leq\pa f-f'\pa_p+\pa f'-\alpha f'\pa_p+\pa \alpha f'-\alpha u\pa_p
$$
$$
\leq\ve_1+\pa(\alpha-1)f'\pa_p+\pa\alpha(f'-u)\pa_p\leq \ve_1+\left(\frac{1}{d}-1\right)\pa f'\pa_p+\frac{1}{d}\pa f'-u\pa_p\leq
$$
$$
\leq \ve_1+\frac{1-d}{d}M^{1+1/p}+\frac{1}{d}\overline{\ve}<\ve.
$$
This ends the case when $\mu(X)<\infty$ and $f,g$ and $h$ are bounded.
Note that, if $p=1$, $q=\infty$, the above reasoning works without the boundedness of $g$. Indeed, $g\in L_\infty$ means that $g$ is essentially bounded
and it suffices to choose $M>0$ so that $\pa g\pa_\infty\le M$ and $\sup_{x\in X}\{|f(x)|,|h(x)|\}\le M$.
This will be used below in Case 2.

Now, we deal with a general case (where $\ve,f,g,h$ have the meanings as before).
Let $\delta\in (0,\ve)$ be such that
\begin{equation}
\pa h-fg\pa_1<\frac{\delta^2}{4},\label{h1}
\end{equation}
and let $\gamma>0$ be such that
\begin{equation}
\delta+2\gamma<\ve.\label{h2}
\end{equation}
Then we consider two cases.

{\bf Case 1:  $p>1$.}
Using Lemma \ref{L3} for the function $x\mapsto\max\{|f(x)|^p,|g(x)|^q,|h(x)|\}$, we obtain a set $A\in\mc S$ such that $\mu(A)<\infty$,
$f\vert_A$, $g\vert_A$, $h\vert_A$ are bounded and
$$
\left(\int_{X\setminus A}|f|^p\;d\mu\right)^{\frac{1}{p}}<\gamma,\;\;\left(\int_{X\setminus A}|g|^q\;d\mu\right)^{\frac{1}{q}}<\gamma\;\;
\left(\int_{X\setminus A}|h|\;d\mu\right)^{\frac{1}{p}}<\gamma\;\mbox{ and }\left(\int_{X\setminus A}|h|\;d\mu\right)^{\frac{1}{q}}<\gamma.
$$
Using (\ref{h1}) and the first part of the proof for the space $(A,\Sigma|_A,\mu|_{A})$,
we infer that there exist $u\in L_p(A)$, $v\in L_q(A)$ such that $h(x)=u(x)v(x)$ for $x\in A$ and
$$
\left(\int_A|f-u|^p\;d\mu\right)^\frac{1}{p}<\delta\;\;\mbox{and}\;\;\left(\int_A|g-v|^q\;d\mu\right)^\frac{1}{q}<\delta.
$$
Additionally, define $u(x)=|h(x)|^\frac{1}{p}$ and $v(x)=|h(x)|^\frac{1}{q}\on{sgn}(h(x))$ for $x\notin A$. Then $h=uv$ and by (\ref{h2}), we have
$$
\pa f-u\pa_p\leq \left(\int_{A}|u-f|^p\;d\mu\right)^\frac{1}{p}+
\left(\int_{X\setminus A}\left(|h|^{\frac{1}{p}}\right)^p\;d\mu\right)^\frac{1}{p}+\left(\int_{X\setminus A}|f|^p\;d\mu\right)^\frac{1}{p}<\delta+2\gamma< \ve
$$
and analogously, $\pa g-v\pa_q<\ve$.

{\bf Case 2:} $p=1$ (then $q=\infty$).
Using Lemma \ref{L3} for the function $x\mapsto\max\{|f(x)|,|h(x)|\}$, we obtain a set $A\in\mc S$ such that $\mu(A)<\infty$, $f\vert_A$,
$h\vert_A$ are bounded and
$$
\int_{X\setminus A}|f|\;d\mu<\gamma,\;\;\int_{X\setminus A}|h|\;d\mu <\gamma^{2}.
$$
Using (\ref{h1}) and the first part of the proof for the space $(A,\Sigma\vert_A,\mu\vert_{A})$,
we infer that there exist $u\in L_1(A)$, $v\in L_\infty(A)$ such that $h(x)=u(x)v(x)$ for $x\in A$ and
$$
\int_A|f-u|\;d\mu<\delta\;\;\mbox{and}\;\;\esssup_{x\in A}|(g-v)(x)|\le\delta.
$$
Denote $G=\esssup_{x\in X\setminus A}|g(x)|$.
If $x\in X\setminus A$, we additionally define
\begin{itemize}
\item $v(x)=(k+1)\gamma$ if $g(x)\in [k\gamma,(k+1)\gamma)\cap [0,G]$ and $k\in \N\cup\{ 0\}$;
\item $v(x)=-(k+1)\gamma$ if $g(x)\in [(-k-1)\gamma,-k\gamma)\cap [-G,0)$ and $k\in \N\cup\{ 0\}$;
\item $v(x)=1$ otherwise (this holds on a set of measure zero).
\end{itemize}
Let also $u(x)=h(x)/v(x)$ for $x\notin A$. Then $h=uv$ and by (\ref{h2}) we have
$$
\pa g-v\pa_\infty=\esssup_{x\in X}|(g-v)(x)|\le\max\{\delta,\gamma\}<\varepsilon,
$$
$$
\pa f-u\pa_1\leq \int_{A}|f-u|\;d\mu+\int_{X\setminus A}|f-u|\;d\mu<\delta+\int_{X\setminus A}|f|\;d\mu+\int_{X\setminus A}|u|\;d\mu
$$
$$
<\delta +\gamma +\int_{X\setminus A}\left\vert\frac{h}{v}\right\vert\;d\mu
\leq\delta +\gamma +\frac{1}{\gamma}\int_{X\setminus A}|{h}|\;d\mu<\delta+2\gamma<\varepsilon.
$$
\hfill $\Box$

Now, consider a particular case of $(X,\mc S, \mu)$ where $X=\BB N$, $\mc S=\mc P(\BB N)$ and $\mu$ is a counting measure on $\mc S$.
Then $L_p$ is reduced to $\ell_p$, for each $p\in [1,\infty]$. If we use Lemma \ref{L2} in this case, we obtain the following corollary.

\begin{cor} \label{C1}
Let $p,q\in [1,\infty]$, $1/p+1/q=1$. Then
multiplication from $\ell_p\times\ell_q$ to $\ell_1$ is a uniformly open mapping.
\end{cor}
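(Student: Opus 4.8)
The plan is to obtain Corollary \ref{C1} as an essentially immediate consequence of Lemma \ref{L2}, once one observes that in the counting‑measure setting every function is countably valued. Indeed, with $X=\N$, $\mc S=\mc P(\N)$ and $\mu$ counting measure, an arbitrary $f\colon\N\to\R$ is $\sum_{n\in\N}f(n)\chi_{\{n\}}$, where $(\{n\})_{n\in\N}$ is a partition of $X$ into sets of measure $1$; in particular this single partition simultaneously realizes any triple $f\in\ell_p$, $g\in\ell_q$, $h\in\ell_1$ as countably valued functions, and $fg\in\ell_1$ by H\"older. So Lemma \ref{L2} applies to arbitrary sequences with no additional hypothesis. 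There are only three pairs $(p,q)$ with $p,q\in[1,\infty]$ and $1/p+1/q=1$ to consider: the symmetric interior case $p,q\in(1,\infty)$, and the two endpoints $(1,\infty)$ and $(\infty,1)$.

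For the interior case $p\in(1,\infty)$ (hence $q=p/(p-1)\in(1,\infty)$) I would argue as follows. Given $\ve>0$, set $\delta=\ve^2/4$. For any $(f,g)\in\ell_p\times\ell_q$ and any $h\in\on{B}_1(fg,\delta)$, the first alternative of Lemma \ref{L2} (valid since $p>1$) gives $h\in\on{B}_p(f,\ve)\cdot\on{B}_q(g,\ve)$. Thus $\on{B}_1(fg,\delta)\sbq\on{B}_p(f,\ve)\cdot\on{B}_q(g,\ve)$ for all $(f,g)$, which is precisely uniform openness of $\Phi\colon\ell_p\times\ell_q\to\ell_1$.

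The endpoint $p=1$, $q=\infty$ needs a tiny adjustment, since Lemma \ref{L2} only yields a \emph{closed} ball in the second factor. Given $\ve>0$, I would fix $\ve'\in(0,\ve)$ and put $\delta=(\ve')^2/4$. For $h\in\on{B}_1(fg,\delta)$ Lemma \ref{L2} gives $h\in\on{B}_1(f,\ve')\cdot\overline{\on{B}_\infty}(g,\ve')$, and since $\overline{\on{B}_\infty}(g,\ve')\sbq\on{B}_\infty(g,\ve)$ and $\on{B}_1(f,\ve')\sbq\on{B}_1(f,\ve)$, we get $\on{B}_1(fg,\delta)\sbq\on{B}_1(f,\ve)\cdot\on{B}_\infty(g,\ve)$, as required. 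Finally, the case $p=\infty$, $q=1$ follows from the previous one by commutativity of multiplication: $\on{B}_\infty(f,\ve)\cdot\on{B}_1(g,\ve)=\on{B}_1(g,\ve)\cdot\on{B}_\infty(f,\ve)$, so the inclusion for $\ell_\infty\times\ell_1$ is literally the one for $\ell_1\times\ell_\infty$ with the roles of the coordinates interchanged.

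The whole argument is bookkeeping; I do not expect a genuine obstacle. The only two points that call for any attention are the passage from the closed ball to the open ball in the $q=\infty$ endpoint — dealt with by shrinking $\ve$ slightly at the harmless cost of a smaller $\delta$ — and remembering to invoke commutativity for $p=\infty$, since Lemma \ref{L2} (like Theorem \ref{TT}) is stated only for $p<\infty$.
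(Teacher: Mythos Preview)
Your proposal is correct and follows exactly the route the paper indicates: in the counting-measure setting every sequence is countably valued, so Lemma~\ref{L2} applies directly. Your treatment is in fact more careful than the paper's one-line remark, since you explicitly handle the passage from the closed to the open ball when $p=1$ and invoke commutativity for $p=\infty$ (Lemma~\ref{L2} being stated only for $p\in[1,\infty)$).
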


Instead of multiplication from $\ell_1\times\ell_\infty$ to $\ell_1$,
one can consider multiplication from $\ell_1\times c_0$ to $\ell_1$ which is a continuous open surjection (cf. \cite{BMW}).
We will show its uniform openness which improves the result of \cite{BMW}.

The following lemma is an elementary exercise. See \cite[Exercise 12, Chapter 2]{Ru}.
\begin{lem} \label{LL}
Assume that $\sum_{n\in\N}a_n<\infty$ where $a_n\ge 0$ for all $n\in\BB N$ and $a_n>0$ for infinitely many $n$'s.
If $w_k=(\sum_{n\ge k}a_k)^{1/2}$, $k\in\N$, then $\sum_{n\in\N}(a_n/w_n)\le 2\sum_{n\in\N}(w_{n}-w_{n+1})=2w_1$.
\end{lem}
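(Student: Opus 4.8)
The plan is to exploit the telescoping structure hidden in the definition of $w_k$. First I would record the elementary consequences of the hypotheses: since $a_n\ge 0$ and $a_n>0$ for infinitely many $n$, every tail $\sum_{n\ge k}a_n$ is strictly positive, so each $w_k$ is well defined and positive, the sequence $(w_k)_{k\in\N}$ is non-increasing, and $w_k\to 0$ because $\sum_{n\in\N}a_n<\infty$. In particular $\sum_{n\in\N}(w_n-w_{n+1})$ is a convergent telescoping series with $\sum_{n=1}^{N}(w_n-w_{n+1})=w_1-w_{N+1}\to w_1$, which already accounts for the equality $2\sum_{n\in\N}(w_n-w_{n+1})=2w_1$ in the statement.

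The heart of the argument is the identity $w_n^{2}-w_{n+1}^{2}=a_n$, immediate from $w_k^{2}=\sum_{n\ge k}a_n$. Factoring it as $a_n=(w_n-w_{n+1})(w_n+w_{n+1})$ and using $0\le w_{n+1}\le w_n$, hence $w_n+w_{n+1}\le 2w_n$, I would obtain for each $n$
$$\frac{a_n}{w_n}=\frac{(w_n-w_{n+1})(w_n+w_{n+1})}{w_n}\le 2\,(w_n-w_{n+1}).$$
Summing this inequality over $n\in\N$ and invoking the telescoping evaluation from the previous step yields $\sum_{n\in\N}(a_n/w_n)\le 2\sum_{n\in\N}(w_n-w_{n+1})=2w_1$, which is exactly the assertion.

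The only thing needing a word of care is that nothing degenerates: the quotients $a_n/w_n$ must make sense, which is why the positivity $w_n>0$ (guaranteed by the assumption that infinitely many $a_n$ are positive) is used; and when $a_n=0$ the corresponding term is $0$ on both sides, so the pointwise bound holds trivially there too. I do not expect a genuine obstacle here — the single estimate $w_n+w_{n+1}\le 2w_n$ carries the whole content, and the remainder is routine manipulation of a convergent telescoping series.
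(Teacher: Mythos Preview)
Your argument is correct and is exactly the standard proof of this classical exercise. The paper itself does not supply a proof at all; it simply states the lemma and refers the reader to \cite[Exercise 12, Chapter 2]{Ru}, so your write-up in fact fills in what the paper leaves out, via the intended route (the identity $a_n=w_n^2-w_{n+1}^2$ together with $w_n+w_{n+1}\le 2w_n$ and telescoping).
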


\begin{thm} \label{T2}
Multiplication from $\ell_1\times c_0$ to $\ell_1$  is a uniformly open mapping. More exactly,
for any $\ve>0$ and $x\in\ell_1$, $y\in c_0$, we have
$$\on{B}_1\left(xy,\frac{\ve^2}{16}\right)\subseteq \on{B}_1({x},{\ve})\cdot (\on{B}_{\infty}({y},{\ve})\cap c_0).$$
\end{thm}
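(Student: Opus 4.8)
The plan is a coordinatewise factorization of a given target against the pair $(x,y)$, treating separately the coordinates on which the product already agrees. Fix $\ve>0$, $x=(x_n)\in\ell_1$, $y=(y_n)\in c_0$, and take $h=(h_n)\in\on{B}_1(xy,\ve^2/16)$, so $\pa h-xy\pa_1<\ve^2/16$. Set $D=\{n\in\N:h_n\neq x_ny_n\}$; on $\N\setminus D$ I will simply put $u_n=x_n$ and $v_n=y_n$, with no error incurred. If $D$ is empty we are done. If $D$ is finite, a crude allocation works: with $\eta:=\pa h-xy\pa_1$, apply Lemma \ref{L1} on each $n\in D$ to $R_n=4\sqrt{\eta}$ and $r_n=2|h_n-x_ny_n|/\sqrt{\eta}$, obtaining $u_n,v_n$ with $h_n=u_nv_n$, $|u_n-x_n|<r_n$, $|v_n-y_n|<R_n$; since $4\sqrt{\eta}<\ve$ and $\sum_{n\in D}r_n=2\sqrt{\eta}<\ve$, and only finitely many coordinates of $y$ are altered, the resulting $v$ still lies in $c_0$.

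The real case is $D$ infinite, and here is the one genuine obstacle: I must force $|v_n-y_n|\to0$ so that $v\in c_0$, while keeping $\pa u-x\pa_1$ strictly below $\ve$ — two demands that compete, and Lemma \ref{LL} is exactly the device that reconciles them. Put $b_n=|h_n-x_ny_n|$, so $b_n\ge0$, $\sum_n b_n=\pa h-xy\pa_1<\ve^2/16$, and $b_n>0$ for infinitely many $n$. Let $w_k=\bigl(\sum_{n\ge k}b_n\bigr)^{1/2}$; then $(w_k)$ decreases to $0$, $w_1=\pa h-xy\pa_1^{1/2}<\ve/4$, and Lemma \ref{LL} gives $\sum_n b_n/w_n\le 2w_1<\ve/2$. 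For each $n\in D$ we have $w_n>0$, so I apply Lemma \ref{L1} to $r_n=2b_n/w_n$ and $R_n=4w_n$ — legitimate since $b_n<r_nR_n/4=2b_n$ — producing $u_n,v_n$ with $h_n=u_nv_n$, $|u_n-x_n|<r_n$ and $|v_n-y_n|<R_n$.

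The verification of the three properties of $u=(u_n)$ and $v=(v_n)$ is then routine. First, $\pa u-x\pa_1=\sum_{n\in D}|u_n-x_n|\le\sum_{n\in D}r_n=2\sum_n b_n/w_n\le4w_1<\ve$, so $u\in\ell_1$ and $u\in\on{B}_1(x,\ve)$. Second, $\pa v-y\pa_\infty=\sup_{n\in D}|v_n-y_n|\le\sup_{n\in D}R_n=4\sup_{n\in D}w_n\le4w_1<\ve$, and since $|v_n-y_n|<4w_n\to0$ on $D$ while $v_n=y_n\to0$ off $D$, we get $v_n\to0$, i.e. $v\in\on{B}_\infty(y,\ve)\cap c_0$. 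Third, $h=uv$ holds coordinatewise by construction on $D$ and trivially off $D$. Hence $h\in\on{B}_1(x,\ve)\cdot(\on{B}_\infty(y,\ve)\cap c_0)$, which is the asserted inclusion, and uniform openness follows since $\ve,x,y$ were arbitrary. Note that the constant $\ve^2/16$ is precisely what the method demands, the binding requirement being $4w_1<\ve$. $\Box$
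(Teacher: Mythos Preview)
Your proof is correct and follows essentially the same approach as the paper: a coordinatewise factorization via Lemma~\ref{L1}, with Lemma~\ref{LL} supplying the tail-square-root weights $w_k$ that make $R_n\to 0$ (forcing $v\in c_0$) while keeping $\sum r_n$ controlled. The only differences are cosmetic choices of the constants $r_n,R_n$ in the two cases; the key mechanism is identical.
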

\begin{proof}
Let $\ve>0$ and $x=(x_n)\in\ell_1$, $y=(y_n)\in c_0$. Consider $z\in \on{B}_1({xy},{{\ve^2}/{16}})$.
We will show that $z=uv$ for some $u=(u_n)\in \on{B}_1({x},{\ve})$ and $v=(v_n)\in \on{B}_\infty({y},{\ve})\cap c_0$.
Let $E=\{ n\in\BB N\colon z _n=x_ny_n\}$. For all $n\in E$ put $u_n=x_n$ and $v_n=y_n$.
Then consider two cases.

{\bf Case 1:} $\BB N\setminus E$ is finite. This is similar to Case 2 in Lemma \ref{L2}.

If $E=\N$, the assertion is trivial. So, assume that $E\neq\BB N$.
Put $\eta=\sum_{n\notin E}|z_n -x_n y_n|$ and $\lambda_n ={|z_n -x_n y_n|}/{\eta}$ for all $n\notin E$.
Then
$$\eta<\ve^2 /16,\;\;\lambda_n\in(0,1]\text{ for all }n\notin E,\;\;\text{ and }\sum_{n\notin E}\lambda_n =1.$$
For every $k\notin E$ we have
$$|z_k -x_k y_k|=\lambda_k\eta<\lambda_k\dfrac{\ve^2}{16}=\frac{1}{4}\left(\frac{\lambda_k\ve}{2}\cdot\frac{\ve}{2}\right) .$$
Applying Lemma \ref{L1}
to $r_k=\lambda_k\ve/2$ and $R=\ve/2$, we can find $u_k, v_k \in\R$ such that $z_k =u_k v_k$ and $|u_k-x_k|<r_k$, $|v_k -y_k|<R$. Then
$$\sum_{n\in\BB N}|u_n-x_n|=\sum_{n\notin E}|u_n -x_n|<\frac{\ve}{2}\sum_{n\notin E} \lambda_n<\ve ,$$
$$\sup_{n\in\BB N}|v_n-y_n|=\sup_{n\notin E}|v_n-y_n|\le\frac{\ve}{2}<\ve .$$
Clearly, $(v_n)\in c_0$ since $v_n=y_n$ for all but finitely many $n$'s.

{\bf Case 2:} $\BB N\setminus E$ is infinite.
Let
$$\eta=2\left(\sum_{n\in\BB N}|z_n-x_ny_n|\right)^{1/2}\text{ and }\lambda_k=\frac{|z_k-x_ky_k|}{\eta\left(\sum_{n\ge k}|z_n-x_ny_n|\right)^{1/2}}
\text{ for }k\notin E .$$
From Lemma \ref{LL} it follows that $\sum_{k\notin E}\lambda_k\le 1$.
By the choice of $z$ we have $\eta <\ve/2$. Hence
for every $k\notin E$ we obtain
$$|z_k -x_k y_k|=\lambda_k\eta\left(\sum_{n\ge k}|z_n-x_ny_n|\right)^{1/2}<\frac{1}{4}(\lambda_k\ve)\left(2\left(\sum_{n\ge k}|z_n-x_ny_n|\right)^{1/2}\right) .$$
Applying Lemma \ref{L1}
to $r_k=\lambda_k\ve$ and $R_k=2\left(\sum_{n\ge k}|z_n-x_ny_n|\right)^{1/2}$, we find $u_k, v_k \in\R$ such that
$z_k =u_k v_k$ and $|u_k-x_k|<r_k$, $|v_k -y_k|<R_k$. Then
$$\sum_{n\in\BB N}|u_n-x_n|=\sum_{n\notin E}|u_n -x_n|<\ve\sum_{n\notin E} \lambda_n\le\ve ,$$
$$\sup_{n\in\BB N}|v_n-y_n|=\sup_{n\notin E}|v_n-y_n|\le 2\left(\sum_{n\in\BB N}|z_n-x_ny_n|\right)^{1/2}=\eta<\frac{\ve}{2}<\ve .$$
Since $R_n\to 0$, we have $(v_n-y_n)\in c_0$, hence also $(v_n)\in c_0$. Therefore $u=(u_n)$ and $v=(v_n)$ are as desired.
\end{proof}

\noindent
{\bf Acknowledgement.} We would like to thank Aleksander Maliszewski for his useful comments which simplified a preliminary part of our former reasoning.


\begin{thebibliography}{abc}
\bibitem {BMW} M. Balcerzak, A. Majchrzycki, A. Wachowicz, {\it Openness of multiplication in some function spaces}, Taiwanese J. Math., 17 (2013), 1115--1126.
\bibitem {BWW} M. Balcerzak, A. Wachowicz, W. Wilczy\'nski, {\it Multiplying balls in $C[0,1]$}, Studia Math. 170 (2005), 203--209.
\bibitem {Bh} E. Behrends, {\it Products of $n$ open subsets in the space of continuous functions on $[0,1]$}, Studia Math. 204 (2011), 73--95.
\bibitem {H} C. Horowitz, {\it An elementary counterexample to the open mapping principle for bilinear maps}, Proc. Amer. Math. Soc. 53 (1975), 293--294.
\bibitem {Ru} W. Rudin, {\it Principles of Mathematical Analysis}, third edition, McGraw-Hill Inc. 1991.
\bibitem {Rud} W. Rudin, {\it Functional Analysis}, second edition, McGraw-Hill Inc. 1991.
\end{thebibliography}
\end{document}